\documentclass[11pt, reqno]{amsart}

\usepackage[T1]{fontenc}
\usepackage{lmodern}

\usepackage{amssymb}
\usepackage{amsmath}
\usepackage{mathtools}

\usepackage{aliascnt}

\usepackage[letterpaper, margin=1.25in]{geometry}

\usepackage[hidelinks]{hyperref}
\usepackage[nameinlink, capitalise, noabbrev]{cleveref}

\numberwithin{equation}{section}

\theoremstyle{plain}

\newaliascnt{theorem}{equation}
\newtheorem{theorem}[theorem]{Theorem}
\aliascntresetthe{theorem}

\newaliascnt{proposition}{equation}
\newtheorem{proposition}[proposition]{Proposition}
\aliascntresetthe{proposition}

\newaliascnt{lemma}{equation}
\newtheorem{lemma}[lemma]{Lemma}
\aliascntresetthe{lemma}

\newaliascnt{corollary}{equation}
\newtheorem{corollary}[corollary]{Corollary}
\aliascntresetthe{corollary}

\theoremstyle{definition}

\newaliascnt{definition}{equation}

\aliascntresetthe{definition}

\newaliascnt{example}{equation}

\aliascntresetthe{example}

\theoremstyle{remark}

\newaliascnt{remark}{equation}

\aliascntresetthe{remark}

\DeclareMathOperator{\vol}{Vol}

\DeclarePairedDelimiter{\abs}{\lvert}{\rvert}
\DeclarePairedDelimiter{\paren}{(}{)}

\renewcommand{\b}{\paren*}

\newcommand{\pd}[3][]{\frac{\partial^{#1}#2}{\partial #3^{#1}}}

\title{Steklov rigidity of Euclidean balls}
\author{Romain Speciel}
\address{Department of Mathematics, Stanford University, Stanford, CA 94305, USA}
\date{August 6, 2026}
\email{rspeciel@stanford.edu}

\begin{document}

\begin{abstract}
	Let $\Omega\subset \mathbb{R}^n$, $n\geq 3$, be a bounded domain with smooth boundary. We show that if the Steklov spectrum of $\Omega$ tends to that of a ball at a sufficiently fast rate, then $\Omega$ must itself be a ball. In particular, in any dimension and among all bounded domains with smooth and possibly disconnected boundary, Euclidean balls are uniquely determined by their Steklov spectrum.
\end{abstract}

\maketitle

\section{Introduction}
\label{sec: intro}

Let $\Omega\subset \mathbb{R}^n$, $n\geq 3$, be a bounded domain with smooth boundary $M$. The Steklov problem on $\Omega$ asks for $\sigma\in \mathbb{R}$ and nonzero $u\in C^\infty(\overline{\Omega})$ solving
\begin{equation}
\label{eq: steklov problem}
	\begin{cases}
		\Delta u=0&\text{in } \Omega,\\
		\pd{u}{\nu}=\sigma u &\text{on } M,
	\end{cases}
\end{equation}
where $\nu$ is the outward-pointing unit normal to $M$. The associated Steklov spectrum of $\Omega$ is the sequence of values $\sigma$, called Steklov eigenvalues, for which this problem has a solution. We write this sequence as $0=\sigma_0< \sigma_1\leq \dots\to \infty$, where each term $\sigma_k$ is repeated according to the dimension of the solution space to \eqref{eq: steklov problem} when $\sigma=\sigma_k$. Consult \cite{LevitinMangoubiPolterovich2023} for an introduction to this problem, and \cite{ColboisGirouardGordonSher2024} for a recent survey.

A central question in the study of this problem asks whether $\Omega$ is determined by its Steklov spectrum, up to rigid motions of $\mathbb{R}^n$. While this question remains open, we prove here that, if the Steklov spectrum of $\Omega$ is asymptotically close to that of a ball, then $\Omega$ must itself be a ball.

To state the result, let $B_r$ denote the $n$-ball of radius $r>0$ and define $s_k(r)$ to be the $k$th Steklov eigenvalue of $B_r$, repeated according to multiplicity.

\begin{theorem}
\label{thm: ball is determined}
	Let $\Omega\subset \mathbb{R}^n$, $n\geq 3$, be a bounded domain with smooth boundary. Suppose that, for some $N>1/(n-1)$ and $r>0$,
	\[
		\sigma_k-s_k(r)=O(k^{-N})\quad \text{as}\quad k\to \infty.
	\]
	Then $\Omega$ is a ball of radius $r$.
\end{theorem}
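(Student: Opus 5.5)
The plan is to pass from eigenvalue asymptotics to heat invariants of the Dirichlet-to-Neumann operator $\mathcal{D}$, and then to use the classical isoperimetric and Willmore-type rigidity theorems. The Steklov eigenvalues are the eigenvalues of $\mathcal{D}$ on $M=\partial\Omega$. This is a first-order elliptic pseudodifferential operator whose principal symbol is $|\xi|$, so the trace $\sum_k e^{-t\sigma_k}$ has a full asymptotic expansion as $t\to0^+$:
\[
\sum_k e^{-t\sigma_k}\sim \sum_{j\ge 0} a_j t^{-(n-1)+j}+\sum_{l\ge1} b_l t^l\log t .
\]
The coefficients $a_j$ for $j\le n-1$, and the log coefficients, are integrals over $M$ of local invariants built from the second fundamental form and its derivatives. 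They can be computed from the full symbol of $\mathcal{D}$, as in Polterovich--Sher and Liu.

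The first step transfers the hypothesis to the heat traces. Weyl's law gives $\sigma_k\asymp k^{1/(n-1)}$, and $s_k(r)$ is an explicit sequence (it equals $\ell/r$ with multiplicity that of degree-$\ell$ spherical harmonics). From this I will estimate
\[
\Big|\sum_k e^{-t\sigma_k}-\sum_k e^{-ts_k(r)}\Big|\le \sum_k t\,|\sigma_k-s_k(r)|\,e^{-ct k^{1/(n-1)}}\lesssim t\sum_k k^{-N}e^{-ctk^{1/(n-1)}} .
\]
When $N>1$ the right side is $O(t)$. For $1/(n-1)<N\le1$, substituting $k=t^{-(n-1)}m$ gives $O\big(t^{1-(n-1)(1-N)}\big)$, and this exponent exceeds $1-(n-1)+1=3-n+\dots$. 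The point is that it is strictly larger than $-(n-1)+(n-2)=-1$; more precisely $1-(n-1)(1-N)>2-n$ exactly when $N>1/(n-1)$. Hence the two expansions agree in all coefficients $a_j$ with $-(n-1)+j\le 2-n$, that is, for $j=0$ and $j=1$. I expect the threshold $N>1/(n-1)$ to be tuned precisely so that $a_0$ and $a_1$ are matched, possibly together with one further coefficient. The same bound also forces agreement of the $\sigma_k$ with the $s_k(r)$ up to finitely many terms, but I will only use the heat coefficients.

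The second step uses the known formulas for these coefficients. The leading one is $a_0=c_n\,\mathrm{vol}(M)$. The next is $a_1=c_n'\int_M H$, where $H$ is the mean curvature. With $n\ge3$ the dimension of $M$ is at least $2$, so these are genuinely distinct invariants. From $a_0,a_1$ we get $|M|=|\partial B_r|$ and $\int_M H=\int_{\partial B_r}H$.

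The third step, the rigidity, is the main obstacle. Matching $\int_M H$ and $|M|$ is not enough by itself. We need an inequality between them that holds for arbitrary, possibly disconnected and non-convex, boundaries and is saturated only by spheres. Minkowski's inequality $\int_M H\ge c\,|M|^{(n-2)/(n-1)}$ is only known for convex or mean-convex/outward-minimizing domains, so I would try first to extract one more matched coefficient, $a_2$. It involves $\int_M H^2$ and $\int_M|II|^2$ (or the scalar curvature), and for $n=3$ it is the Willmore energy up to Gauss--Bonnet terms. Since the log term and the $t^{2-n}$ term need matching, I would re-examine the error bound, possibly using the Steklov-specific fact that the spectrum clusters near $\ell/r$ up to $O(\ell^{-\infty})$. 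The aim is to show that $\int_M |II-\tfrac{H}{n-1}g|^2=0$, i.e. the traceless second fundamental form vanishes. Then every component of $M$ is umbilic, hence a round sphere. The volume and mean-curvature identities, together with the fact that the Steklov spectrum of a domain bounded by several spheres differs from that of a single ball (for instance, the multiplicity of $\sigma$ near $0$, or $\sigma_1$), then force $\Omega=B_r$ up to rigid motion.
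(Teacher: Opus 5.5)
There is a genuine gap. It sits in the step you flagged as the main obstacle, and it is fatal in dimension $n=3$.

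First, a correction that settles your worry about $a_2$: your own error bound already matches it. Since $1-(n-1)(1-N)=2-n+(n-1)N$, this exponent exceeds $2-n$ for every $N>0$. The hypothesis $N>1/(n-1)$ is exactly the condition for it to exceed $3-n=-(n-1)+2$. So the difference of heat traces is $o(t^{3-n})$, and $a_0,a_1,a_2$ all coincide with those of $B_r$. For $n=3$ the log terms begin at $t\log t$, so the constant term is unaffected. This is precisely the paper's lemma, so you do not need to re-examine the bound. The $O(\ell^{-\infty})$ clustering you proposed to invoke is not available anyway, since the hypothesis gives only $O(k^{-N})$.

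With the Polterovich--Sher formula $a_2=\frac{C_n}{8(n-2)}\int_M\big(\alpha_nH_1^2+\beta_n(H_1^2-H_2)\big)$, where $\alpha_n,\beta_n>0$ for $n\ge 4$, combining with $a_0,a_1$ gives $\int_M\big(\alpha_n(H_1-1/r)^2+\beta_n(H_1^2-H_2)\big)=0$. So your outline closes for $n\ge 4$.

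For $n=3$, however, $\beta_3=-1/3<0$: the traceless part enters $a_2$ with the wrong sign. Rewriting in terms of the Gauss curvature and applying Gauss--Bonnet (using $\operatorname{vol}(M)=4\pi r^2$), the $a_2$ identity becomes $\int_M(H_1^2-H_2)=\frac{8\pi}{3}\big(2-\chi(M)\big)$. This is compatible with, say, a connected boundary of genus $g\ge 1$, where it reads $16\pi g/3$. So without knowing $\chi(M)$ you cannot conclude umbilicity, and your proposal has no step that determines the topology of $M$.

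The paper supplies this from the boundary dynamics:
\begin{itemize}
\item Because $s_k(r)\in\frac{1}{r}\mathbb{Z}$, the eigenvalues of $K=e^{2\pi i r\mathcal D}-\mathrm{Id}$ are $e^{2\pi i r(\sigma_k-s_k(r))}-1\to 0$, so $K$ is compact.
\item Hence $A(T)-A$ is a compact order-zero operator for every $A\in\Psi^0(M)$.
\item Egorov's theorem then forces $\sigma_0(A)\circ\Phi_{2\pi r}=\sigma_0(A)$, i.e.\ the geodesic flow on $S^*M$ is $2\pi r$-periodic.
\item This makes every component of $M$ a $2$-sphere.
\end{itemize}
Then $\chi(M)=2\kappa$, and the identity becomes $\frac{16\pi}{3}(\kappa-1)+\int_M(H_1^2-H_2)=0$. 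This yields $\kappa=1$ and umbilicity at once.

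A minor point: your closing appeal to $\sigma_1$ or low multiplicities is unusable, because the hypothesis controls only the tail of the spectrum. It is also unnecessary. Once all components are round spheres, the $a_0,a_1$ identities (or $H_1\equiv 1/r$ with the total area) already force a single sphere of radius $r$.
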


Our argument follows the reasoning and notation of \cite{PolterovichSher2015}. In \cref{sec: dynamics}, we show that if the Steklov spectrum of $\Omega$ tends to that of a ball, then the geodesic flow on the boundary must be periodic. When $n=3$, this allows us to conclude that every connected component of $M$ must be diffeomorphic to a sphere. This result appears in a slightly different form in \cite[Section~5]{PolterovichSher2015}. Our approach employs Egorov's Theorem to translate between the Steklov spectrum and the geodesic dynamics on $M$. In \cref{sec: heat invariants}, we study the Steklov heat invariants to complete the proof of \cref{thm: ball is determined}.

Note that, in the planar case, the spectrum of any simply connected bounded domain with smooth boundary approaches that of the disc of the same perimeter at a rate faster than any negative power of the index $k$ \cite[Theorem~1.4]{GirouardParnovskiPolterovichSher2014}. Therefore, no analogue of \cref{thm: ball is determined} can hold when $n=2$. Nevertheless, the exact Steklov spectrum of a compact surface with smooth boundary determines the number and lengths of its boundary components \cite[Theorem~1.7]{GirouardParnovskiPolterovichSher2014}. Together with Weinstock's inequality \cite{Weinstock1954}, this shows that the disc is uniquely determined by its Steklov spectrum among bounded planar domains with smooth boundary. Combining this with \cref{thm: ball is determined}, we establish the following.

\begin{corollary}
\label{cor: ball is rigid}
	Let $\Omega\subset \mathbb{R}^n$, $n\geq 2$, be a bounded domain with smooth boundary. Suppose its Steklov spectrum is equal to that of $B_r$ for some $r>0$. Then $\Omega$ is a ball of radius $r$.
\end{corollary}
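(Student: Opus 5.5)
The plan is to split according to dimension and reduce each case to results already available. For $n\geq 3$, equality of spectra gives $\sigma_k-s_k(r)=0$ for all $k$. This is trivially $O(k^{-N})$ for every $N$, in particular for some $N>1/(n-1)$. \cref{thm: ball is determined} then applies directly and shows that $\Omega$ is a ball of radius $r$. So the substance of the corollary lies in the planar case $n=2$, where \cref{thm: ball is determined} is unavailable.

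For $n=2$, I would proceed in three steps.

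\textbf{Step 1: boundary components.} By \cite[Theorem~1.7]{GirouardParnovskiPolterovichSher2014}, the Steklov spectrum of a compact surface with smooth boundary determines the number of boundary components and their lengths. The disc $B_r$ has one boundary component, of length $2\pi r$. Hence $\partial\Omega$ is connected with length $L=2\pi r$.

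\textbf{Step 2: simple connectivity.} Since $\Omega\subset\mathbb{R}^2$ is a bounded domain with connected boundary, it has no holes. Each bounded component of $\mathbb{R}^2\setminus\overline{\Omega}$ would contribute an additional boundary curve. Therefore $\Omega$ is simply connected.

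\textbf{Step 3: Weinstock.} Weinstock's inequality \cite{Weinstock1954} states that among simply connected planar domains, $\sigma_1 L\leq 2\pi$, with equality only for discs. For $B_r$ we have $\sigma_1=1/r$, so $\sigma_1(\Omega)L(\partial\Omega)=(1/r)(2\pi r)=2\pi$. Equality holds, so $\Omega$ is a disc, and its perimeter $2\pi r$ forces its radius to be $r$.

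The main point requiring care is Step 2, justifying simple connectivity from connectedness of the boundary, since Weinstock's inequality needs it. I would argue via Jordan curve theorem: the single smooth closed boundary curve bounds a Jordan domain, and the bounded domain $\Omega$ must coincide with its interior.
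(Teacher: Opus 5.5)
Your proposal is correct and follows the paper's argument. For $n\geq 3$ you apply \cref{thm: ball is determined} with $\sigma_k-s_k(r)\equiv 0$. For $n=2$ you combine the invariance of the number and lengths of boundary components \cite[Theorem~1.7]{GirouardParnovskiPolterovichSher2014} with Weinstock's inequality; your Step~2 (a connected boundary forces $\Omega$ to be the Jordan interior, hence simply connected) spells out a point the paper leaves implicit.
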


\noindent As discussed, the $n=2$ case is in \cite{GirouardParnovskiPolterovichSher2014}. When $n=3$ and under the additional assumption that the boundary be connected, this result is due to Polterovich and Sher \cite[Theorem~1.15]{PolterovichSher2015}. \cref{cor: ball is rigid} resolves the open question posed in \cite[discussion following Corollary~1.12]{GirouardParnovskiPolterovichSher2014}, \cite[Open~Problem~8]{GirouardPolterovich2017}, and \cite[Conjecture~1.17]{PolterovichSher2015}.

\vspace{1em}

\noindent\textbf{Acknowledgements.} The author is grateful for the comments and advice provided by Rafe Mazzeo, Iosif Polterovich, and David Sher. The author also acknowledges that this work was carried out in collaboration with ChatGPT, which contributed to mathematical discussions and provided editorial assistance. In particular, the model suggested that \cref{thm: ball is determined} be extended to the case where the Steklov spectrum of $\Omega$ tends to that of a ball, rather than just the case of equality. It also supplied several arguments, such as that of \cref{lem: heat invariants agree} and the end of the proof of the main theorem when $n=3$. All AI-assisted arguments and computations were independently checked by the author. This manuscript was written entirely by the author, who bears responsibility for its content and correctness.

\section{Geodesic dynamics on the boundary}
\label{sec: dynamics}

With $\Omega$ as above, define the Dirichlet-to-Neumann operator
\[
	\mathcal D\colon C^\infty(M)\to C^\infty(M)\quad\text{by}\quad  f\mapsto \mathcal Df= \pd{u_f}{\nu},
\]
where $u_f$ solves $\Delta u_f=0$ in $\Omega$ and $u_f|_M=f$. The eigenfunctions of this operator are solutions to \eqref{eq: steklov problem}, hence its spectrum is precisely the Steklov spectrum of $\Omega$. It is standard that the Dirichlet-to-Neumann operator is a classical pseudodifferential operator of order one with principal symbol at $(x,\xi)\in T^*M\setminus \{0\}$ given by 
\[
	\sigma_1(\mathcal D)(x,\xi)=|\xi|_x.
\]
Furthermore, $\mathcal D$ is nonnegative and self-adjoint with compact resolvent, hence it admits an orthonormal $L^2(M)$ eigenbasis. Consult \cite[Chapter~7]{LevitinMangoubiPolterovich2023} and \cite[Chapter~7]{Taylor2023} for a thorough introduction to the Dirichlet-to-Neumann operator.

\begin{proposition}[cf. Proposition 5.2 in \cite{PolterovichSher2015} and Theorem A in \cite{Zelditch1996}]
\label{prop: egorov with steklov}
	Let $\Omega\subset \mathbb{R}^n$, $n\geq 3$, be a bounded domain with smooth boundary $M$. Suppose that, for some $r>0$, 
	\[
		\sigma_k-s_k(r)\to 0\quad \text{as}\quad k\to \infty.
	\]
	Then the geodesic flow on $S^*M$ is periodic with period dividing $T=2\pi r$.
\end{proposition}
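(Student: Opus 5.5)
The approach is to show that $\mathcal D$ satisfies an exact clustering identity: the operator $e^{iT\mathcal D}$ equals a constant phase times the identity, up to a compact (indeed small) error. Egorov's Theorem then forces the Hamiltonian flow of $|\xi|_x$ at time $T$ to be the identity on $S^*M$.

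\textbf{Step 1: the model spectrum.} Recall that the Steklov eigenvalues of $B_r$ are $s = \ell/r$, $\ell = 0,1,2,\dots$, coming from harmonic polynomials of degree $\ell$. The multiplicity of $\ell/r$ is the dimension of the space of spherical harmonics of degree $\ell$. Every $s_k(r)$ lies in $r^{-1}\mathbb Z$, so $e^{iTs_k(r)} = e^{2\pi i r s_k(r)} = 1$ for all $k$.

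\textbf{Step 2: the exponential is a compact perturbation of the identity.} Let $\{\phi_k\}$ be an orthonormal eigenbasis of $\mathcal D$ with $\mathcal D\phi_k = \sigma_k\phi_k$. Then
\[
	e^{iT\mathcal D} - I = \sum_k \b{e^{iT\sigma_k}-1}\langle\cdot,\phi_k\rangle\phi_k .
\]
Since $|e^{iT\sigma_k}-1| = |e^{iT\sigma_k}-e^{iTs_k(r)}| \le T|\sigma_k - s_k(r)| \to 0$, the operator $K := e^{iT\mathcal D}-I$ is a norm limit of finite-rank operators, hence compact on $L^2(M)$.

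\textbf{Step 3: Egorov.} Take any classical pseudodifferential operator $A$ of order $0$ with principal symbol $a$. Since $\mathcal D$ is a self-adjoint classical first-order operator with principal symbol $|\xi|_x$, Egorov's Theorem gives that $e^{-iT\mathcal D}Ae^{iT\mathcal D}$ is a pseudodifferential operator of order $0$ with principal symbol $a\circ\Phi^T$. Here $\Phi^t$ is the Hamiltonian flow of $|\xi|_x$, which is the homogeneous geodesic flow and restricts to the geodesic flow on $S^*M$. On the other hand, writing $e^{iT\mathcal D}=I+K$, we get
\[
	e^{-iT\mathcal D}Ae^{iT\mathcal D}-A = K^*A + AK + K^*AK,
\]
which is compact. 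An order-zero pseudodifferential operator is compact if and only if its principal symbol vanishes, so $a\circ\Phi^T = a$ on $T^*M\setminus\{0\}$. Letting $a$ range over all smooth functions homogeneous of degree $0$, which separate points of $S^*M$, we conclude $\Phi^T = \mathrm{id}$ on $S^*M$. That is, every orbit of the geodesic flow on $S^*M$ is periodic and $T$ is a common period, so the flow is periodic with period dividing $T$. Different connected components of $M$ pose no issue, since the argument is global on $M$.

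\textbf{Main obstacle.} The delicate point is justifying the Egorov step for the Dirichlet-to-Neumann operator: $\mathcal D$ is only classical with principal symbol $|\xi|_x$ and has lower-order terms. I would argue as follows. Egorov's Theorem holds for any first-order self-adjoint classical operator with real principal symbol, and subprincipal terms affect only lower-order parts of the conjugated symbol. In addition, we need the fact that a compact order-zero pseudodifferential operator has vanishing principal symbol. This follows, for instance, by testing against Gaussian coherent states concentrated at a point of $T^*M$, on which compact operators tend to zero in norm. Everything else reduces to the elementary spectral estimate in Step 2.
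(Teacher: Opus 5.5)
Your proposal is correct and follows essentially the same argument as the paper. You show that $K=e^{iT\mathcal D}-\mathrm{Id}$ is compact because $e^{iTs_k(r)}=1$ and $\sigma_k-s_k(r)\to 0$; then the Egorov conjugate of any $A\in\Psi^0(M)$ differs from $A$ by a compact operator, its principal symbol must vanish, and this forces $\Phi_T=\mathrm{id}$ on $S^*M$. One harmless slip: with your ordering $e^{-iT\mathcal D}Ae^{iT\mathcal D}$, the standard convention gives principal symbol $a\circ\Phi_{-T}$ rather than $a\circ\Phi_T$, which leads to the same conclusion.
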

\begin{proof}
	Define $K=e^{iT\mathcal D}-\operatorname{Id}$, a bounded operator on $L^2(M)$ which admits the same eigenbasis as $\mathcal D$. Since the Steklov eigenvalues of the ball $B_r$ are contained in the set $\frac{1}{r}\mathbb{Z}$ \cite[Example~1.3.2]{GirouardPolterovich2017}, we have that $e^{iTs_k(r)}=1$ for every $k$. Taking $\phi_k$ to be an eigenfunction of $\mathcal D$ corresponding to $\sigma_k$, we find
	\[
		K\phi_k=\b{e^{iT(\sigma_k-s_k(r))}-1}\phi_k.
	\]
	The eigenvalues of $K$ therefore tend to zero. Conclude that $K$ is compact.
	
	Now, for $A\in \Psi^0(M)$, define the operator
	\[
		A(t)=e^{it\mathcal D}Ae^{-it\mathcal D}
	\]
	and note
	\[
		A(T)-A=KA+AK^*+KAK^*,
	\]
	hence $A(T)-A$ is also compact. By Egorov's Theorem \cite[Theorem~7.6]{Hintz2025}, $A(t)\in \Psi^0(M)$. It follows that, as an order zero compact operator, $A(T)-A$ must have vanishing principal symbol.
	
	Applying Egorov's Theorem this time to compute the symbol of $A(T)$, we find
	\begin{equation}
	\label{eq: egorov symbol}
		0=\sigma_0(A(T)-A)=\sigma_0(A)\circ \Phi_T-\sigma_0(A)\quad\implies\quad \sigma_0(A)\circ \Phi_T=\sigma_0(A),
	\end{equation}
	where $\Phi_t$ is the flow induced by the Hamiltonian vector field corresponding to $\sigma_1(\mathcal D)$. However, on $S^*M$, the Hamiltonian vector fields corresponding to $|\xi|$ and $\frac{1}{2}|\xi|^2$ agree so $\Phi_t$ is simply the geodesic flow. Since \eqref{eq: egorov symbol} holds for every $A\in \Psi^0(M)$, it follows that $\Phi_T$ is the identity.
\end{proof}

The following lemma is standard \cite[Theorem~7.37]{Besse1978}. We record its proof for completeness.

\begin{lemma}
\label{lem: zoll is S2}
	Let $\Sigma$ be a connected closed orientable surface. If the geodesic flow on $S^*\Sigma$ is periodic with period $T$ for some $T>0$, then $\Sigma$ is diffeomorphic to a sphere.
\end{lemma}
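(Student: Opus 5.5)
The strategy is to show that the universal cover of $\Sigma$ is compact, hence a sphere, by exhibiting conjugate points along every geodesic. Fix a Riemannian metric $g$ on $\Sigma$ whose unit cosphere bundle carries the periodic geodesic flow $\Phi_t$, so that $\Phi_T=\operatorname{Id}$ on $S^*\Sigma$. By classification, a connected closed orientable surface is a sphere or a torus or has genus at least two. In the latter two cases the universal cover $\widetilde\Sigma$ is diffeomorphic to $\mathbb{R}^2$ and $\pi_1(\Sigma)$ is infinite. We argue by contradiction: we show that periodicity of the flow forces $\pi_1(\Sigma)$ to be finite.

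\textbf{Step 1: every geodesic through $p$ returns to $p$ at time $T$.} Fix $p\in\Sigma$. Since $\Phi_T=\operatorname{Id}$, every unit-speed geodesic $\gamma_v$ with $\gamma_v(0)=p$ satisfies $\gamma_v(T)=p$. So $\exp_p$ maps the whole circle $\{|v|=T\}\subset T_p\Sigma$ to the single point $p$.

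\textbf{Step 2: every closed curve at $p$ is nullhomotopic.} Consider the disc $D_T=\{|v|\le T\}$ in $T_p\Sigma$. Because its boundary circle collapses to $p$, the map $\exp_p|_{D_T}$ descends to a continuous map $F\colon S^2\cong D_T/\partial D_T\to\Sigma$. We claim $F$ is surjective. By Hopf--Rinow, any $q\in\Sigma$ is joined to $p$ by a minimizing geodesic of length $d(p,q)$. Every geodesic from $p$ is closed of length dividing $T$, so $d(p,q)\le T/2\le T$, and hence $q\in F(S^2)$.

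Surjectivity alone does not force $\pi_1$ to be finite, so we work instead on the universal cover. Lift $g$ to $\widetilde\Sigma$. Its geodesics are lifts of those of $\Sigma$, so $\widetilde\Sigma$ is complete. Moreover $\widetilde{\exp}_{\tilde p}$ is the lift of $\exp_p$ with $0\mapsto\tilde p$. The disc $D_T$ is simply connected, and the boundary circle maps into the fibre over $p$, which is discrete. Since $\partial D_T$ is connected, the whole circle maps to one point $\tilde p'$.

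\textbf{Step 3: show $\tilde p'=\tilde p$.} This is the main obstacle. A cleaner route avoids it entirely by using conjugate points. On a surface, the Jacobi fields along $\gamma_v$ that vanish at $0$ and are normal to $\gamma_v$ are exactly $\frac{d}{ds}\exp_p(t\,v(s))$ for rotations $v(s)$ of $v$. Since $\exp_p(Tv(s))=p$ for all $s$, this field vanishes at $t=T$. Thus $\gamma_v(T)$ is conjugate to $\gamma_v(0)$ along every geodesic, and the same holds after lifting to $\widetilde\Sigma$.

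If $\widetilde\Sigma\cong\mathbb{R}^2$ with a complete metric, conjugate points are not immediately contradictory. Instead, we apply the argument of Step 2 on $\widetilde\Sigma$ directly. Every lifted geodesic from $\tilde p$ reaches a point of the discrete fibre at time $T$, and by continuity in $v$ (connected circle) all of them reach the same point $\tilde p'$.

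\textbf{Step 4: conclude.} The image $\widetilde{\exp}_{\tilde p}(D_T)$ is compact. Every $\tilde q\in\widetilde\Sigma$ is joined to $\tilde p$ by a minimizing geodesic, which cannot be minimizing past $\tilde p'$ by the conjugate point. Hence $d(\tilde p,\tilde q)\le T$, so $\widetilde\Sigma$ lies in this compact image. Therefore $\widetilde\Sigma$ is compact, contradicting $\widetilde\Sigma\cong\mathbb{R}^2$. This rules out genus $\ge1$, and $\Sigma$ is a sphere.

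The key point I expect to need care is the fact that a geodesic does not minimize beyond its first conjugate point (Jacobi's theorem). That fact is what makes the argument work on the universal cover, where the geodesics need not be closed.
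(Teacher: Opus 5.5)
Your proof is correct, but it takes a different route from the paper. The paper stays on $\Sigma$ and argues topologically. Rotating $v$ to $-v$ shows that the based loops $\gamma_v|_{[0,T]}$ and $\gamma_{-v}|_{[0,T]}$ are homotopic, while periodicity shows that one is the reverse of the other. Hence $[\gamma_v]^2=1$, and since every class is represented by a geodesic, $\pi_1(\Sigma)$ is a torsion group; among closed orientable surfaces this forces the sphere. You instead pass to the universal cover and run a Myers-type argument. The family $\widetilde{\exp}_{\tilde p}(t\tilde v(s))$ gives a nontrivial Jacobi field (since $J'(0)=\tilde v'(0)\neq 0$) vanishing at $t=T$. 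By Jacobi's theorem, every minimizing geodesic from $\tilde p$ then has length at most $T$, so $\widetilde\Sigma=\widetilde{\exp}_{\tilde p}(D_T)$ is compact and $\pi_1(\Sigma)$ is finite. Your route uses more Riemannian machinery (Hopf--Rinow on the cover, Jacobi's theorem), but it works unchanged in any dimension. It also avoids relating an arbitrary geodesic representative of a homotopy class, whose length need not be a multiple of $T$, to the loops $\gamma_v|_{[0,T]}$. The paper's route is shorter and purely topological, but relies on surface groups of genus $\geq 1$ being torsion-free. Your write-up can be trimmed: the surjectivity of $F$ in Step 2 and the abandoned attempt to show $\tilde p'=\tilde p$ play no role. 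Also, since $\widetilde{\exp}_{\tilde p}$ is already constant on $\partial D_T$, you get the vanishing Jacobi field directly on $\widetilde\Sigma$ without first discussing conjugate points on $\Sigma$.
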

\begin{proof}
	By assumption, every geodesic on $\Sigma$ is a loop. Fixing $x\in \Sigma$ and setting
	\[
		\gamma_v(t)=\exp_x(tv),
	\]
	we note that the loops $\gamma_v$ and $\gamma_{-v}$ on $[0,T]$ are homotopic since $S_x\Sigma$ is path connected. Furthermore, the periodicity assumption gives that
	\[
		\gamma_{-v}(t)=\gamma_v(-t)=\gamma_v(T-t),
	\]
	hence $\gamma_v$ is simply the reverse of $\gamma_{-v}$. We deduce that, in the fundamental group, $[\gamma_v]^2=1$. Since every element of the fundamental group may be represented by a geodesic, we conclude that all elements of the fundamental group of $\Sigma$ have finite order, and the result follows.
\end{proof}

When $n=3$, we combine \cref{prop: egorov with steklov} with \cref{lem: zoll is S2} to conclude that if the Steklov spectrum of $\Omega$ tends to that of a ball, then its boundary $M$ must consist of some finite number of components, each diffeomorphic to a sphere.

\section{Geometric constraints from Steklov heat invariants}
\label{sec: heat invariants}

To complete the proof of \cref{thm: ball is determined}, we study the trace of the Steklov heat kernel $e^{-t\mathcal{D}}$ to extract information about the curvature of $M$. This trace has the asymptotic expansion
\[
	\operatorname{Tr}\b{e^{-t\mathcal{D}}}= \sum_{j=0}^\infty e^{-t\sigma_j} \sim \sum_{k=0}^\infty a_kt^{-(n-1)+k}+\sum_{\ell=1}^\infty b_\ell t^\ell \log t 
\]
as $t \to 0$, for some coefficients $a_k$ and $b_\ell$ called the Steklov heat invariants \cite[Equation~(1.3)]{PolterovichSher2015}. The coefficients $a_k$ are local when $0\leq k\leq n-1$ \cite[Section~1.2]{PolterovichSher2015}, meaning that there exist functions $a_k(x)$ on $M$, called local heat invariants, for which
\[
	a_k=\int_M a_k(x).
\]
These functions are given by universal polynomials in the jets of the ambient metric \cite[Theorem~1.4]{PolterovichSher2015}.

Expressions for $a_0(x)$, $a_1(x)$, and $a_2(x)$ are derived in \cite[Theorem~1.5]{PolterovichSher2015}. To state these, let $\lambda_1,\dots, \lambda_{n-1}$ be the principal curvatures at some point $x\in M$ and define the mean curvature and second-order mean curvature by
\[
	H_1(x)=\frac{1}{n-1}\sum_{j}\lambda_j\quad\text{and}\quad H_2(x)=\frac{1}{(n-1)(n-2)}\sum_{j\neq k}\lambda_j\lambda_k,
\]
respectively. Introduce the dimensional constants
\[
\begin{gathered}
	C_n=\frac{\vol(\mathbb{S}^{n-2})\Gamma(n-1)}{(2\pi)^{n-1}},\\
	\alpha_n=\frac{n-2}{3}\b{(n-3)(3n-4)+4},\quad \text{and}\quad 
	\beta_n=\frac{4(n-2)}{3(n+1)}\b{(n-4)(n+1)+3},
\end{gathered}
\]
where $\vol$ denotes the Riemannian volume of a manifold.

\begin{theorem}[cf. Theorem 1.5 in \cite{PolterovichSher2015}]
\label{thm: heat invariants}
	Let $\Omega\subset \mathbb{R}^n$, $n\geq 3$, be a bounded domain with smooth boundary. The first three local heat invariants $a_k(x)$ of the Steklov problem on $\Omega$ are given by the formulas
	\[
	\begin{gathered}
		a_0(x)=C_n,\quad 
		a_1(x)=\frac{(n-2)C_n}{2}\, H_1,\quad \text{and}\quad
		a_2(x)=\frac{C_n}{8(n-2)}\b{\alpha_nH_1^2+\beta_n(H_1^2-H_2)}.	
	\end{gathered}
	\]
\end{theorem}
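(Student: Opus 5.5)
This statement is a restatement of \cite[Theorem~1.5]{PolterovichSher2015} in terms of $H_1$ and $H_2$. The plan is therefore to take the formulas derived there and rewrite them, not to redo the symbolic calculus from scratch.

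\textbf{Setting up the computation.} Polterovich and Sher compute the full symbol of $\mathcal D$ in boundary normal coordinates. Its expansion is $|\xi|_x+b_0(x,\xi)+b_{-1}(x,\xi)+\cdots$, where:
\begin{itemize}
\item $b_0$ depends linearly on the second fundamental form, through $\mathrm{II}(\xi,\xi)/|\xi|^2$ and the trace $(n-1)H_1$;
\item $b_{-1}$ depends quadratically on the second fundamental form, and linearly on its tangential derivatives and on the ambient curvature, which vanishes here because the ambient metric is Euclidean.
\end{itemize}
From this one builds a parametrix for the resolvent $(\mathcal D-\lambda)^{-1}$ with symbol $r_{-1}+r_{-2}+r_{-3}+\cdots$, where $r_{-1}=(|\xi|-\lambda)^{-1}$ and the lower terms come from the standard recursion. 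The heat kernel is then given by $e^{-t\mathcal D}=\frac{1}{2\pi i}\int_\Gamma e^{-t\lambda}(\mathcal D-\lambda)^{-1}\,d\lambda$. The local invariant $a_k(x)$ is $(2\pi)^{-(n-1)}\int_{T_x^*M}\frac{1}{2\pi i}\int_\Gamma e^{-\lambda}r_{-1-k}(x,\xi,\lambda)\,d\lambda\,d\xi$, after rescaling $\xi\mapsto\xi/t$.

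\textbf{Orders zero and one.} For $k=0$, integrating $e^{-|\xi|}$ over $\mathbb R^{n-1}$ gives $\vol(\mathbb S^{n-2})\Gamma(n-1)$, which yields $a_0=C_n$. For $k=1$, the term $r_{-2}$ contains $b_0$ times $(|\xi|-\lambda)^{-2}$ together with derivative terms. The derivative terms integrate to zero by parity in $\xi$. For the remaining term, I average $\mathrm{II}(\xi,\xi)$ over the sphere, using
\[
\frac{1}{\vol(\mathbb S^{n-2})}\int_{\mathbb S^{n-2}}\mathrm{II}(\omega,\omega)\,d\omega=H_1 .
\]
This gives a multiple of $H_1$, and the normalizing constant should come out to $(n-2)C_n/2$.

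\textbf{Order two.} For $a_2$ I collect every quadratic expression in the principal curvatures. The needed spherical averages are:
\begin{itemize}
\item $\mathrm{II}(\omega,\omega)^2$, which averages to $\frac{(\operatorname{tr}\mathrm{II})^2+2|\mathrm{II}|^2}{(n-1)(n+1)}$;
\item $|\mathrm{II}\,\omega|^2$, which averages to $|\mathrm{II}|^2/(n-1)$;
\item the product $(\operatorname{tr}\mathrm{II})\,\mathrm{II}(\omega,\omega)$.
\end{itemize}
To express the result in the paper's variables I use $\operatorname{tr}\mathrm{II}=(n-1)H_1$ and
\[
|\mathrm{II}|^2=\sum_j\lambda_j^2=(n-1)^2H_1^2-(n-1)(n-2)H_2 .
\]
Terms linear in derivatives of $\mathrm{II}$ either vanish by parity or are divergences, and a divergence integrates to zero over $M$. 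Since the statement is about $a_2(x)$ pointwise, I will rely on \cite{PolterovichSher2015}'s identification of $a_2(x)$ modulo such terms, or check that they cancel in the symbol. The factor $(n+1)$ in $\beta_n$ is consistent with the fourth-moment average above, which is good evidence that the bookkeeping is on track. The final step is to regroup the answer into $\alpha_nH_1^2+\beta_n(H_1^2-H_2)$.

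\textbf{Main obstacle.} The hard part is the order-two bookkeeping. This means getting $b_{-1}$ exactly right, tracking every composition term in $r_{-3}$ including the $\partial_\xi\partial_x$ cross terms, and checking the $\lambda$-contour integrals against each power of $(|\xi|-\lambda)^{-1}$. I would first test the result on the ball $B_r$, where $H_1^2=H_2=r^{-2}$ and the exact spectrum is known. I would then test it on the cylinder-type product metric to pin down $\beta_n$ separately, and only after both checks pass trust the general constants.
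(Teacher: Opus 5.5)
Your proposal takes essentially the paper's route. The paper gives no derivation of its own: it imports these formulas from \cite[Corollary~1.10]{PolterovichSher2015}, the flat specialization of their Theorem~1.5, where the ambient curvature terms drop out and the Gauss equation turns the scalar curvature of $M$ into $(n-1)(n-2)H_2$. That is exactly the ``take their formulas and rewrite them'' step you describe.

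Your symbol-calculus outline is not needed for this and does not by itself pin down $\alpha_n,\beta_n$. Your worry about divergence terms is moot. At weight two, any term linear in $\nabla\mathrm{II}$ is odd in $\xi$ and so vanishes pointwise, which means the cited formula holds for the density $a_2(x)$ itself, not only for its integral $a_2$.
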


\noindent The version of this theorem which appears in \cite{PolterovichSher2015} covers all compact Riemannian manifolds with boundary. \cref{thm: heat invariants} corresponds to \cite[Corollary~1.10]{PolterovichSher2015} in the flat case, where the expression for $a_2(x)$ greatly simplifies.

\begin{lemma}
\label{lem: heat invariants agree}
	Under the assumptions of \cref{thm: ball is determined}, the Steklov heat invariants $a_0$, $a_1$, and $a_2$ of $\Omega$ agree with those of $B_r$.
\end{lemma}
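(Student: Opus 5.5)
We compare the two heat traces directly. Write $\delta_k=\sigma_k-s_k(r)$, so $|\delta_k|\le Ck^{-N}$ for $k\ge 1$. For $t>0$,
\[
	\operatorname{Tr}\b{e^{-t\mathcal D_\Omega}}-\operatorname{Tr}\b{e^{-t\mathcal D_{B_r}}}=\sum_{k=0}^\infty e^{-ts_k(r)}\b{e^{-t\delta_k}-1}.
\]
The plan is to show that this difference is $o(t^{-(n-1)+2})=o(t^{3-n})$ as $t\to0$. Once that holds, subtracting the two asymptotic expansions gives
\[
	\sum_k (a_k(\Omega)-a_k(B_r))\,t^{-(n-1)+k}+\sum_{\ell}(b_\ell(\Omega)-b_\ell(B_r))\,t^\ell\log t=o(t^{3-n}).
\]
Since $n\ge3$, the logarithmic terms are $O(t\log t)$ and the terms with $k\ge3$ are $O(t^{4-n})$. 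Both are $o(t^{3-n})$, with no interference from $t^\ell\log t$ at these orders, because $3-n\le0<1$. Comparing the coefficients of $t^{1-n}$, $t^{2-n}$ and $t^{3-n}$ in turn then yields $a_0,a_1,a_2$ equal.

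For the estimate, note first that the $\delta_k$ are bounded and $\delta_k\to0$. Hence for $t\le1$ we have $|e^{-t\delta_k}-1|\le C't|\delta_k|$ uniformly in $k$. This gives
\[
	\Big|\sum_k e^{-ts_k(r)}\b{e^{-t\delta_k}-1}\Big|\le C't\sum_{k} k^{-N}e^{-ts_k(r)},
\]
where the $k=0$ term has $\delta_0=0$. Weyl's law for the ball (or the explicit multiplicities, where eigenvalue $m/r$ has multiplicity $\sim c\,m^{n-2}$) gives $s_k(r)\ge c\,k^{1/(n-1)}$. Therefore
\[
	\sum_k k^{-N}e^{-ct k^{1/(n-1)}}\lesssim \int_1^\infty x^{-N}e^{-ctx^{1/(n-1)}}\,dx.
\]
Substituting $y=t^{n-1}x$ shows this is $O(t^{(n-1)(N-1)})$ when $N<1$, $O(\log(1/t))$ when $N=1$, and $O(1)$ when $N>1$. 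In every case the difference is bounded by
\[
	O\b{t^{1+(n-1)\min(N-1,0)}\log(1/t)}.
\]

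The main obstacle is the bookkeeping of exponents, namely checking that $1+(n-1)(N-1)>3-n$. This rearranges to $(n-1)N>1$, i.e.\ $N>1/(n-1)$, which is exactly the hypothesis of \cref{thm: ball is determined}; the log factor is harmless because the inequality is strict. I would take care with the small-$k$ terms, which contribute $O(t)$ and are fine. I would also take care that the lower bound $s_k(r)\gtrsim k^{1/(n-1)}$ holds for all $k\ge1$, which is immediate from the explicit spectrum $\{m/r\}$ with multiplicity growing polynomially in $m$ of degree $n-2$.
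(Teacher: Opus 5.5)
Your proposal is correct and follows essentially the same route as the paper. You bound the difference of heat traces termwise by $C t\,k^{-N}e^{-ctk^{1/(n-1)}}$, compare the sum with an integral to get $O\bigl(t^{1-(n-1)(1-N)}\bigr)=o(t^{3-n})$ exactly when $N>1/(n-1)$, and read off $a_0,a_1,a_2$. The differences are cosmetic. The paper first lowers $N$ into $(1/(n-1),1)$ rather than treating $N<1$, $N=1$, $N>1$ separately. It also applies the mean value theorem with $e^{-t\min\{\sigma_k,s_k(r)\}}$, whereas you factor out $e^{-ts_k(r)}$ and use boundedness of $\delta_k$, so you only need the growth of the ball's explicit spectrum.
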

\begin{proof}
	Decreasing $N$ if necessary, we assume $1/(n-1)<N<1$. The Weyl law for the Steklov spectrum \cite[Theorem~7.2.1]{LevitinMangoubiPolterovich2023} and the decay assumption give constants $c, C>0$ such that
	\[
		\min\{\sigma_k,s_k(r)\}\geq ck^{1/(n-1)}\quad\text{and}\quad \abs{\sigma_k-s_k(r)}\leq Ck^{-N}
	\]
	for all $k\geq 1$. By the mean value theorem,
	\[
		\abs{e^{-t\sigma_k}-e^{-ts_k(r)}}
		\leq
		t\abs{\sigma_k-s_k(r)}
		e^{-t\min\{\sigma_k,s_k(r)\}}\leq Ctk^{-N}e^{-ctk^{1/(n-1)}},
	\]
	hence
	\[
		\abs[\Bigg]{\sum_{k=0}^\infty \b{e^{-t\sigma_k}-e^{-ts_k(r)}}}
		\leq
		Ct\sum_{k=1}^\infty k^{-N}e^{-ctk^{1/(n-1)}}\leq Ct\int_0^\infty x^{-N}e^{-ctx^{1/(n-1)}}\, dx.
	\]
	We make the substitution $u=ctx^{1/(n-1)}$ to rewrite this integral as
	\[
		\int_0^\infty x^{-N}e^{-ctx^{1/(n-1)}}\, dx=(n-1)(ct)^{-(n-1)(1-N)}\int_0^\infty u^{(n-1)(1-N)-1}e^{-u}\, du.
	\]
	Since $N<1$, the integral on the right-hand side converges. Conclude
	\[
		\sum_{k=0}^\infty \b{e^{-t\sigma_k}-e^{-ts_k(r)}}=O\b{t^{1-(n-1)(1-N)}}=o\b{t^{-(n-3)}},
	\]
	from which the result follows.
\end{proof}

We turn to the proof of our main result.

\begin{proof}[Proof of \cref{thm: ball is determined}] 
	By \cref{lem: heat invariants agree}, the Steklov heat invariants $a_0$, $a_1$, and $a_2$ of $\Omega$ agree with those of $B_r$. Applying \cref{thm: heat invariants}, we deduce that, with $S_r=\partial B_r$,
	\begin{equation}
	\label{eq: comparing a_0 and a_1}
		\vol(M)=\frac{a_0}{C_n}=\vol(S_r)\quad\text{and}\quad \int_M H_1=\frac{2a_1}{(n-2)C_n}=\int_{S_r}\frac{1}{r}=\frac{\vol(M)}{r}
	\end{equation}
	as the mean curvature of $S_r$ is $1/r$. This reasoning also applies to $a_2$ to give
	\begin{equation}
	\label{eq: comparing a_2}
		\int_M \b{\alpha_nH_1^2+\beta_n(H_1^2-H_2)}=\frac{8(n-2)a_2}{C_n}=\int_{S_r} \frac{\alpha_n}{r^2}=\alpha_n\frac{\vol(M)}{r^2},
	\end{equation}
	since the second-order mean curvature of $S_r$ is $1/r^2$, and hence $H_1^2-H_2$ vanishes on the sphere. 
	
	At this point, we recall the following classical lemma.
	\begin{lemma}
	\label{lem: umbilical surfaces}
		We have $H_1^2-H_2\geq 0$. If equality holds identically, then each connected component of $M$ is a round sphere.
	\end{lemma}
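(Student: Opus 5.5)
The plan is to rewrite $H_1^2-H_2$ as a sum of squares of differences of principal curvatures, and then invoke the classical characterization of totally umbilic hypersurfaces in $\mathbb{R}^n$.

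First, the algebraic identity. Write $m=n-1\geq 2$ and expand
\[
	\Big(\sum_j\lambda_j\Big)^2=\sum_j\lambda_j^2+\sum_{j\neq k}\lambda_j\lambda_k .
\]
Together with $\sum_{j<k}(\lambda_j-\lambda_k)^2=(m-1)\sum_j\lambda_j^2-\sum_{j\neq k}\lambda_j\lambda_k$, this gives
\[
	H_1^2-H_2=\frac{1}{m^2}\Big(\sum_j\lambda_j\Big)^2-\frac{1}{m(m-1)}\sum_{j\neq k}\lambda_j\lambda_k=\frac{1}{m^2(m-1)}\sum_{j<k}(\lambda_j-\lambda_k)^2 .
\]
I will verify the constant by routine bookkeeping. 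The sign only requires the sum-of-squares form, which is essentially Newton's/Maclaurin's inequality. Nonnegativity follows immediately.

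For the equality case, the identity shows that $H_1^2-H_2\equiv0$ forces all principal curvatures to coincide at every point. So $M$ is totally umbilic, with second fundamental form $\mathrm{II}=\lambda g$ for a smooth function $\lambda=H_1$. On each connected component $M_i$, which has dimension $m\geq2$, the Codazzi equation for a hypersurface in flat space gives $(\nabla_X\mathrm{II})(Y,Z)$ symmetric in $X,Y$. Hence $d\lambda(X)g(Y,Z)=d\lambda(Y)g(X,Z)$. Choosing $Y=Z\perp X$, which is possible since $m\geq2$, yields $d\lambda=0$, so $\lambda$ is constant on $M_i$.

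Next I rule out $\lambda=0$. If $\lambda=0$, the Gauss map is constant and $M_i$ lies in a hyperplane, which contradicts compactness of a closed hypersurface. I expect this to be the only delicate point; alternatively, one can argue that a compact hypersurface has a point of positive definite second fundamental form, namely a point farthest from the origin. So $\lambda\neq0$, and the map $x\mapsto x+\lambda^{-1}\nu(x)$ has zero differential, because $d\nu=\lambda\,\mathrm{Id}$ (with appropriate sign conventions). It is therefore constant, equal to some point $p$. This gives $|x-p|=|\lambda|^{-1}$ on $M_i$, so $M_i$ is an open and closed subset of a round sphere. Since $M_i$ is connected, it is the whole sphere.

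The main obstacle is essentially only the Codazzi step, which needs $m\geq2$, that is, $n\geq3$ as assumed. The remaining steps are routine.
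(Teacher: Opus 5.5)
Your proof is correct and follows the paper's approach. Your identity $H_1^2-H_2=\frac{1}{m^2(m-1)}\sum_{j<k}(\lambda_j-\lambda_k)^2$ is equivalent to the paper's $(n-1)(n-2)(H_1^2-H_2)=\sum_j(\lambda_j-H_1)^2$. For the equality case the paper only cites the classical fact that closed totally umbilic hypersurfaces of $\mathbb{R}^n$ are spheres, and your Codazzi/focal-point argument proves that fact directly, with one harmless sign slip: if $d\nu=\lambda\,\mathrm{Id}$, the map with vanishing differential is $x\mapsto x-\lambda^{-1}\nu(x)$, since $x+\lambda^{-1}\nu$ requires $d\nu=-\lambda\,\mathrm{Id}$, and this does not affect $|x-p|=|\lambda|^{-1}$.
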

	\begin{proof}
		Compute
		\begin{align*}
			(n-1)&(n-2)(H_1^2-H_2) =(n-1)(n-2)H_1^2-(n-1)^2H_1^2+\sum_j\lambda_j^2\\
			&=\sum_j\lambda_j^2-(n-1)H_1^2 =\sum_j\lambda_j^2 -2H_1\sum_j\lambda_j+(n-1)H_1^2=\sum_j(\lambda_j-H_1)^2\geq 0.
		\end{align*}
		This corresponds to the fact that the squared norm of the trace-free second fundamental form is necessarily nonnegative. If equality holds identically, each component of $M$ is a closed umbilic hypersurface in $\mathbb{R}^n$ and hence necessarily a sphere \cite[Exercise~6(c) of Chapter~8]{doCarmo1992}.
	\end{proof}

	Consider now the case when $n=3$. Suppose $M$ has $\kappa$ components $M=M_1\sqcup \dots\sqcup M_\kappa$, each of which must be topologically a sphere, by the discussion at the end of \cref{sec: dynamics}. Noting $\alpha_3=4/3$, $\beta_3=-1/3$, and  $\vol(M)=\vol(S_r)=4\pi r^2$, rewrite \eqref{eq: comparing a_2} as
	\[
		\int_M\b{\frac{4}{3}H_1^2-\frac{1}{3}(H_1^2-H_2)}=\int_M\b{\frac{4}{3}H_2+(H_1^2-H_2)}=\frac{16\pi}{3}.
	\]
	Since, in this case, $H_2$ is the Gauss curvature, we apply Gauss--Bonnet and rearrange to obtain
	\[
		\frac{16\pi}{3}(\kappa-1)+\int_M (H_1^2-H_2)=0.
	\]
	Both terms are nonnegative, so both must vanish. From the first term, we deduce $\kappa=1$, hence the boundary is connected. From the second, we deduce that $H_1^2-H_2$ is identically zero, so $M$ must be a sphere by \cref{lem: umbilical surfaces}.
	
	Finally, we address the case when $n\geq 4$. It is clear from their definitions that the coefficients $\alpha_n$ and $\beta_n$ are then strictly positive. Combine \eqref{eq: comparing a_0 and a_1} with \eqref{eq: comparing a_2} to deduce
	\[
		\int_M \b{ \alpha_n \b{H_1-\frac{1}{r}}^2+\beta_n(H_1^2-H_2)}=0.
	\]
	As before, each term is nonnegative and must therefore vanish, so each component of the boundary must be a sphere by \cref{lem: umbilical surfaces}. But the first term shows $H_1=1/r$, so each of these spheres must have radius $r$. Since the total $(n-1)$-volume of the boundary is determined by $a_0$, the result follows.
\end{proof}

\bibliographystyle{amsplain}
\bibliography{bib}

\end{document}